\newtheorem{thm}{Theorem}[section]
\newtheorem{lem}[thm]{Lemma}
\newtheorem{prop}[thm]{Proposition}
\theoremstyle{definition}
\theoremstyle{remark}
\newtheorem{rem}[thm]{Remark}
\numberwithin{equation}{section}
\begin{document}

\title[Levi-Civita functional equation ]{A note on  Levi-Civita functional equation}%
\author{Belfakih Keltouma and Elqorachi Elhoucien}%
\address{}%
\email{}%

 \thanks{Key words and phrases. Sine addition formula, Levi-Civita functional equation, Monoid,  functional
equation, multiplicative function.  }
\thanks{2000 Mathematics Subject Classification. 39B52, 39B32 }

%\date{}%
%\dedicatory{}%
%\commby{}%
% ----------------------------------------------------------------
\begin{abstract}
In this paper we find the solutions of the functional equation
$$f(xy)=g(x)h(y)+\sum_{j=1}^{n}g_j(x)h_j(y),\;x,y \in
M,$$ where $M$ is a monoid,  $n\geq 2$, and $g_j$ (for $j=1,...,n$)
are linear combinations of at least $2$ distinct nonzero
multiplicative functions.
\end{abstract}
\maketitle
% ----------------------------------------------------------------
\section{introduction }
A solution of Levi-Civita functional equation on a monoid $M$ is an
ordered set of functions $f,g_1,...,g_N,h_1,...,h_N$$
:M\longrightarrow \mathbb{C} $ satisfying Levi-Civita functional
equation
\begin{equation} \label{Levi} f(xy)=\sum_{j=1}^{N}g_j(x)h_j(y),\; x,y \in M.\end{equation}
Shulman \cite[Lemma 4]{171} described the measurable solutions of
(\ref{Levi}) on locally compact groups in terms of group
representations under the additional assumption that both
$g_1,...,g_N$ and $h_1,...,h_N$ are linearly independent. A
description of the (nondegeneated) solutions on abelian groups can
be found in Sz�kelyhidi \cite{s88} for all $N$. They turn out to
be exponential polynomials, that is, sums of products of polynomials
of additive functions and of solutions of
$\varphi(x+y)=\varphi(x)\varphi(y)$. That is in general not so for
non-abelian groups as Stetk{\ae}r
\cite[Exemple 1]{st} reveals. We refer also to \cite[Theorem 5.2]{book}.\\
Chung, Kannappan and Ng \cite{45} solved the Levi-Civita functional
equation
\begin{equation} \label{Levi1} f(xy)=f(x)g(y)+f(y)g(x)+h(x)h(y),\; x,y \in
G,\end{equation} where $G$ is a group.\\ The functional equation
(\ref{Levi}) was for each $N\leq 4$ thoroughly worked out on abelian
groups with implicit formulas for the solutions by Sz�kelyhidi
[\cite{197}, Theorem 10.4]. Ebanks \cite{EB} studied the functional
equation
\begin{equation} \label{Eb} f(xy)=k(x)L(y)+g(x)h(y),\;
x,y \in S,\end{equation} for four unknown central functions
$f,g,h,k$ on certain non abelian semigroups $S$, where $L$ is a
fixed multiplicative function on S. Stetk{\ae}r \cite{st} removed
the restriction that the solutions of (\ref{Eb}) should be central
and solved the functional equation \begin{equation} \label{stc}
f(xy)=g_1(x)h_1(y)+\mu(x)h_2(y),\;x,y \in G,\end{equation} where $G$
is a group (that need not to be abelian), and $\mu:G\longrightarrow
\mathbb{C}$ is a character of $G$.\\The motivation of the present
work is the recent paper, by Ebank and Stetk{\ae}r \cite{st2} in
which they derive explicit formulas for the solutions
$f,g_1,h_1,h_2:M\longrightarrow K$ of the functional equation
\begin{equation}\label{299} f(xy)=g_1(x)h_1(y)+g(x)h_2(y),\; x,y \in
M,\end{equation}  where $M$ is a monoid (that need not to be
abelian), $K$ is a field, $g$ is a linear combination of $n\geq 2$
distinct nonzero multiplicative functions with nonzero
coefficients.\\
We wish to see what makes the paper \cite{st2} work by analyzing a
more general Levi-Civita functional equation
\begin{equation} \label{gmn}f(xy)=g(x)h(y)+\sum_{j=1}^{n}g_j(x)h_j(y),\;x,y \in M, \end{equation} where $M$
is a monoid, $K$ is a field, and $f,g,h,h_j:M\longrightarrow K$ are
the unknown functions, and where for all $j\in \{1,..,n\}$, $g_j$ is
defined by \begin{equation}
\label{fgn}g_j=\sum_{i=n_j}^{m_j}b_i\mu_i,\end{equation} where
$n_1=1$, $m_j=n_{j+1}-1$ for $j\in \{1,2,...,n-1\}$, $m:=m_{n}$,
$m_{j}-n_j \geq 1 $, for $j\in \{1,2,...,n\}$,  $b_i \neq 0$ for all
$i\in \{1,...,m\}$, and $\{\mu_1,\mu_2,...,\mu_m\}$ are distinct
nonzero multiplicative functions on $M$. Thus we have grouped the
elements of the sequence $\mu_1,\mu_2,...,\mu_m$ into $n$
consecutive, disjoint intervals by
$$\mu_{n_1}=\mu_1,...,\mu_{m_1};\mu_{n_2},...,\mu_{m_2};\mu_{n_j},...,\mu_{m_j};...;\mu_{n_n},...,\mu_{m_n}=\mu_m$$
in accordance with the sequence $g_1;g_2;...;g_j;...;g_n$. We note
here that $g_j$ has at least $2$ terms on the right hand side like
in the motivating paper \cite{st2}.\\
Equation (\ref{gmn}) is an extension of the functional equation
(\ref{299}) (Take $n=2$ and $h_2=0$ in (\ref{gmn})).\\
In our present paper:
\begin{enumerate}\item We get explicit solution formulas involving
multiplicative functions.\item The solutions are abelian, except
when they blatantly need not be abelian, so that non-abelian
phenomena essentially do not occur, except for some arbitrary
functions. This contrasts the formally simpler functional equation
studied in the paper \cite{st} (It is due to $m_j-n_j\geq 1$).\item
The proofs are elementary algebraic manipulations (no homological
algebra and the link). No analysis or geometry come into play.
\end{enumerate}
The key elements of our set up are
\begin{enumerate} \item The functions are defined on a monoid
$M$ with an identity element $e$, and map into a field $K$.
\item Whenever we refer to the functional equation (\ref{gmn}, then the functions $g_j$, $j=1,2,...n$ have the
forms (\ref{fgn}).
\item The multiplicative functions $\mu_i$, appearing in (\ref{gmn}) are nonzero
and distinct and their coefficients $b_i$ are nonzero.
\end{enumerate}

 Our results
are organized as follows. We discuss two cases according to whether
$f\neq 0$ and the set $\{g_1,\mu_1,\mu_2,...,\mu_m\}$ is linearly
independent
 (Proposition
\ref{indpnh}) or not (Proposition \ref{depn}). The main results are
given in Theorem \ref{soln}.\\
Throughout this paper $M$ denotes a monoid: A semigroup $S$ with a
neutral element $e$, and $K$ is a field. Let $K^\ast =K\setminus
\{0\}$ denote the subset of nonzero element. A multiplicative
function $\mu:M\longrightarrow K$ is a function such that
$\mu(xy)=\mu(x)\mu(y)$ for all $x,y \in M$. Let $M(n,\mathbb{C})$
denote the algebra of all complex $n\times n$ matrices, and let
$A^{T}$ denote the transpose matrix of a matrix $A \in
M(n,\mathbb{C})$, and $A^{-1}$ its inverse. Let $n \in \mathbb{N}$
such that $n\geq 2$, and let $q\in \{1,...,n\}$, we introduce the
following notation
$$\{1,2,...,\widehat{q},...,n\}:=\{1,...,n\} \backslash\{q\}.$$ The
fact that distinct multiplicative functions are independent is
frequently used in the present paper. The following result is taken
from Proposition 1(a) in \cite{st2}.
\begin{prop} \label{mudep}Let $n\in \mathbb{N}$, let $\mu_1,\mu_2,...,\mu_n:S\longrightarrow
K$ be $n$ distinct multiplicative functions, and let
$a_1,a_2,...,a_n \in K$.\\ If $a_1\mu_1+a_2\mu_2+...+a_n\mu_n=0$,
then $a_1\mu_1=a_2\mu_2=...=a_n\mu_n=0$. Thus any set of distinct
nonzero multiplicative functions is linearly independent.
\end{prop}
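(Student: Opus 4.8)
The plan is to prove the sharpened statement---that each individual summand $a_j\mu_j$ vanishes---by induction on $n$, using multiplicativity to manufacture a second linear relation that can be subtracted from the first so as to eliminate one of the functions. The closing assertion about linear independence will then follow immediately.

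For the base case $n=1$ the hypothesis $a_1\mu_1=0$ is already the conclusion. For the inductive step I assume the statement for any $n-1$ distinct multiplicative functions and suppose $\sum_{j=1}^{n}a_j\mu_j=0$ with $\mu_1,\dots,\mu_n$ distinct. First I would exploit multiplicativity: evaluating the relation at a product $xy$ and using $\mu_j(xy)=\mu_j(x)\mu_j(y)$ gives, for every fixed $x\in S$, the identity $\sum_{j=1}^{n}a_j\mu_j(x)\,\mu_j=0$ as functions of the remaining variable. Subtracting from this the original relation scaled by $\mu_n(x)$ cancels the $j=n$ term and yields
$$\sum_{j=1}^{n-1}a_j\bigl(\mu_j(x)-\mu_n(x)\bigr)\mu_j=0 \quad\text{for every } x\in S.$$

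Now for each fixed $x$ the left-hand side is a vanishing linear combination of the $n-1$ distinct multiplicative functions $\mu_1,\dots,\mu_{n-1}$, so the induction hypothesis forces $a_j\bigl(\mu_j(x)-\mu_n(x)\bigr)\mu_j=0$ for every $j\le n-1$ and every $x$. Here is where distinctness does the real work: since $\mu_j\neq\mu_n$ for $j<n$, I can choose for each such $j$ a point $x_j$ with $\mu_j(x_j)\neq\mu_n(x_j)$; as $K$ is a field the scalar $\mu_j(x_j)-\mu_n(x_j)$ is invertible, whence $a_j\mu_j=0$. This establishes $a_1\mu_1=\dots=a_{n-1}\mu_{n-1}=0$, and substituting back into the original relation leaves $a_n\mu_n=0$ as well.

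Finally, if each $\mu_j$ is in addition nonzero, then $a_j\mu_j=0$ together with the existence of a point at which $\mu_j$ does not vanish forces $a_j=0$, giving the stated linear independence. I do not anticipate a genuine obstacle---this is the classical independence-of-characters argument---but the one point demanding care is that the conclusion is the per-summand statement $a_j\mu_j=0$ rather than merely $a_j=0$. This distinction is essential because a multiplicative function on a monoid may vanish at some points, so the induction must be carried out on this stronger form throughout.
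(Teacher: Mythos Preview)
Your argument is correct and is precisely the classical Artin--Dedekind independence-of-characters proof, carried out with the appropriate care that on a semigroup a multiplicative function may vanish somewhere, so that the induction must track the conclusion $a_j\mu_j=0$ rather than $a_j=0$.

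There is nothing to compare against in the paper itself: the authors do not prove this proposition but simply import it verbatim from \cite{st2}, Proposition~1(a). Your write-up therefore supplies what the paper omits. One minor stylistic remark: when you say ``evaluating the relation at a product $xy$'', you are of course not producing new information from the hypothesis (since $xy\in S$ is already covered), but rather rewriting $\mu_j(xy)$ as $\mu_j(x)\mu_j(y)$ to separate variables; this is clear from context, but you might phrase it as ``applying multiplicativity to the value at $xy$'' to avoid any suggestion that a new identity is being invoked.
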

\begin{rem} \label{rmk} Any $i\in \{1,2,...,m \}$ in the form (\ref{fgn}), belongs to
exactly one of the $n$ disjoint subintervals $[n_j,m_j]$ of $[1,m]$,
and so the number $j\in [1,n]$ of that subinterval is uniquely
determined by $i$. Let $J(i):=j\in [1,n]$, when $j$ is the number of
the subinterval containing $i$. So $i \longmapsto J(i)$,
$i=1,2,...,m$, is a function of $i$. Note that $J:
[1,m]\longrightarrow [1,n]$ is surjective, that $i\in
[n_{J(i)},m_{J(i)}]$ for $i=1,2,...,m$, and that
$J^{-1}(\{k\})=[n_k,m_k]$ for all $k\in \{1,2,...,n\}.$\\Hence, we
have
\begin{equation}\label{frmij} \sum_{j=1}^{n}g_j(x)h_j(y)=\sum_{j=1}^{n}\sum_{i=n_j}^{m_j}b_i\mu_i(x)h_j(y)=\sum_{i=1}^{m}b_i\mu_i(x)h_{J(i)}(y).\end{equation}
Thus, the functional equation (\ref{gmn}) has another form
\begin{equation} \label{nf}f(xy)=g(x)h(y)+\sum_{i=1}^{m}b_i\mu_i(x)h_{J(i)}(y),\; x,y \in
G.\end{equation}
\end{rem}
\section{General solution of the functional equation (\ref{gmn})}
The following proposition gives a partial solution of the functional
equation (\ref{gmn}).
\begin{prop} \label{indpn} Let $(f,g,h,h_1,h_2,...,h_n)$ be a solution of the functional
equation (\ref{gmn}) such that $f\neq 0$, $h(e)\neq 0$, and such
that the set $\{g,\mu_1,\mu_2,...,\mu_m\}$ is linearly independent.
Then there exist a nonzero multiplicative function $\chi:M
\longrightarrow K$ such that $\chi \neq \mu_i$ for all $i \in
\{1,2,...,m\}$, and $a,b \in K^{\ast},$ and $c_j \in K $ with $ j\in
\{1,2,...,n\}$ such that
$$f=ab\chi, \;g=b\chi -\sum_{k=1}^{n}c_kg_k,\;
h=a\chi,\;h_j=ac_j\chi,$$ where $ j\in \{1,2,...,n\}$.
\end{prop}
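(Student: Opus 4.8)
The plan is to reduce (\ref{gmn}) to a homogeneous ``sine-type'' equation, use associativity to force a normalized version of $h$ to be multiplicative, and then exploit the hypothesis that each $g_j$ carries at least two multiplicative terms to collapse all the $h_j$ onto a single character.

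First I would put $y=e$ in (\ref{gmn}). Since $h(e)\neq 0$, this expresses $g$ as $g=h(e)^{-1}(f-\sum_j h_j(e)g_j)$, i.e. $g$ is a scalar multiple of $f$ plus an element of $\mathrm{span}\{\mu_1,\dots,\mu_m\}$ (each $g_j$ lies in that span by (\ref{fgn})). From the assumed independence of $\{g,\mu_1,\dots,\mu_m\}$, together with $h(e)\neq 0$ and Proposition \ref{mudep}, I would deduce that $\{f,\mu_1,\dots,\mu_m\}$ is itself linearly independent; this is the independence I actually use below. Substituting the expression for $g$ back into (\ref{gmn}) and writing $\chi:=h/h(e)$ and $\tilde h_j:=h_j-h_j(e)\chi$, I obtain, in the form (\ref{nf}), the normalized equation $f(xy)=f(x)\chi(y)+\sum_i b_i\mu_i(x)\tilde h_{J(i)}(y)$, where now $\chi(e)=1$ and every $\tilde h_j(e)=0$.

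Next I would compute $f(xyz)$ two ways, as $f((xy)z)$ and as $f(x(yz))$, expanding each with the normalized equation and using $\mu_i(xy)=\mu_i(x)\mu_i(y)$. Equating the two expansions and reading the result as a relation among the functions $f,\mu_1,\dots,\mu_m$ of the variable $x$, the linear independence established above forces every coefficient to vanish. The coefficient of $f$ gives $\chi(yz)=\chi(y)\chi(z)$, so $\chi$ is a nonzero multiplicative function, while the coefficient of $\mu_i$ gives, for each $i$, the addition-type identity $\tilde h_{J(i)}(yz)=\tilde h_{J(i)}(y)\chi(z)+\mu_i(y)\tilde h_{J(i)}(z)$.

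The crucial step, and the one I expect to be the main obstacle to phrase cleanly, is the collapse of the $\tilde h_j$, which is exactly where $m_j-n_j\geq 1$ enters. Because each block $[n_j,m_j]$ contains two distinct indices $i\neq i'$, both give the identity above with the \emph{same} function $\tilde h_j$; subtracting them yields $(\mu_i(y)-\mu_{i'}(y))\tilde h_j(z)=0$ for all $y,z$. Since distinct multiplicative functions disagree somewhere, this forces $\tilde h_j=0$, hence $h_j=h_j(e)\chi$ for every $j$. The normalized equation then reduces to $f(xy)=f(x)\chi(y)$, and $x=e$ gives $f=f(e)\chi$, with $f(e)\neq 0$ because $f\neq 0$. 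Finally I would set $a:=h(e)$, $b:=f(e)/h(e)$ and $c_j:=h_j(e)/h(e)$, read off $f=ab\chi$, $h=a\chi$, $h_j=ac_j\chi$, and substitute into the $y=e$ formula for $g$ to obtain $g=b\chi-\sum_k c_k g_k$. It then remains only to note $a,b\in K^\ast$ (from $h(e)\neq 0$ and $f(e)\neq 0$) and that $\chi\neq\mu_i$ for all $i$, since $\chi=\mu_i$ would make $f=f(e)\mu_i$ dependent on the $\mu_i$, contradicting the independence of $\{f,\mu_1,\dots,\mu_m\}$.
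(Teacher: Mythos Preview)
Your proof is correct and follows essentially the same route as the paper: an associativity computation, read as a linear relation in the variable $x$, yields the multiplicativity of $\chi=h/h(e)$ and a sine-type identity for each $h_{J(i)}$, after which the condition $m_j-n_j\ge 1$ (two distinct $\mu_i$'s in each block) collapses every $h_j$ onto $\chi$. The only cosmetic difference is that you first translate the hypothesis into independence of $\{f,\mu_1,\dots,\mu_m\}$ and normalize via $\tilde h_j=h_j-h_j(e)\chi$, so that the associativity step is carried out with $f$ in the leading slot, whereas the paper substitutes (\ref{wx0n}) into (\ref{gmn}) to isolate $g(xy)$ and runs the associativity argument with $g$ in the leading slot, using the assumed independence of $\{g,\mu_1,\dots,\mu_m\}$ directly; the resulting identities (\ref{wx00n})--(\ref{wx01n}) are exactly your equations up to the shift $\tilde h_j\leftrightarrow h_j-h_j(e)\chi$.
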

\begin{proof} Taking $y=e$ in (\ref{gmn}) we get \begin{equation} \label{wx0n}
f=h(e)g+\sum_{j=1}^{n}h_j(e)g_j.\end{equation} Substituting
(\ref{wx0n}) in equation (\ref{gmn}) we get
$$h(e)g(xy)+\sum_{j=1}^{n}h_j(e)g_j(xy)=g(x)h(y)+\sum_{j=1}^{n}g_j(x)h_j(y),$$
from which we deduce that
\begin{equation} \label{wx1n}
g(xy)=\frac{1}{h(e)}[g(x)h(y)+\sum_{j=1}^{n}g_j(x)h_j(y)-\sum_{j=1}^{n}h_j(e)g_j(xy)].\end{equation}
Let $x,y,z$ be in $M$, using (\ref{gmn}) we have
$$f((xy)z)=g(xy)h(z)+\sum_{j=1}^{n}g_j(xy)h_j(z),$$
$$f(x(yz))=g(x)h(yz)+\sum_{j=1}^{n}g_j(x)h_j(yz).$$
Using the associativity of the monoid operation, we deduce that
$$g(xy)h(z)+\sum_{j=1}^{n}g_j(xy)h_j(z)=g(x)h(yz)+\sum_{j=1}^{n}g_j(x)h_j(yz).$$
Replacing $g(xy)$ in this equation by its form in (\ref{wx1n}) we
obtain after computations
$$g(x) \left[h(y)\frac{h(z)}{h(e)}-h(yz)\right]=\sum_{j=1}^{n}g_j(x)\left[h_j(yz)-h_j(y)\frac{h(z)}{h(e)}\right]+
\sum_{j=1}^{n}g_j(xy)\left[h_j(e)\frac{h(z)}{h(e)}-h_j(z)\right]$$
$$=\sum_{j=1}^{n}\left(\sum_{i=n_j}^{m_j}b_i\mu_i(x)\left[h_j(yz)-h_j(y)\frac{h(z)}{h(e)}\right]
+\sum_{i=n_j}^{m_j}b_i\mu_i(xy)\left[h_j(e)\frac{h(z)}{h(e)}-h_j(z)\right]\right)$$
$$=\sum_{j=1}^{n} \sum_{i=n_j}^{m_j}b_i\mu_i(x)\left[h_j(yz)-h_j(y)\frac{h(z)}{h(e)}+\mu_i(y)\left(h_j(e)\frac{h(z)}{h(e)}-h_j(z)\right)\right].$$
$$=\sum_{i=1}^{m} b_i\mu_i(x)\left[h_{J(i)}(yz)-h_{J(i)}(y)\frac{h(z)}{h(e)}+\mu_i(y)\left(h_{J(i)}(e)\frac{h(z)}{h(e)}-h_{J(i)}(z)\right)\right].$$
Since by hypothesis, $\{g,\mu_1,\mu_2,...,\mu_m\}$ is linearly
independent and all $b_i$ are nonzero we deduce that
 \begin{equation} \label{wx00n}
h(yz)-h(y)\frac{h(z)}{h(e)}=0,\end{equation}and
\begin{equation} \label{wx01n}
h_{J(i)}(yz)-h_{J(i)}(y)\frac{h(z)}{h(e)}+\mu_i(y)(h_{J(i)}(e)\frac{h(z)}{h(e)}-h_{J(i)}(z))=0,\end{equation}
for all $i=1,2,...,m.$\\
Equation (\ref{wx00n}) implies that $h=a \chi$, where $\chi:=h/h(e):
M\longrightarrow K$ is a nonzero multiplicative function and
$a=h(e)\neq 0.$ Using the facts that $J^{-1}(\{j\})=[n_j,m_j]$,
$\mu_{n_j},...,\mu_{m_j}$ are distinct, and $m_j-n_j\geq 1$ for all
$j\in \{1,2,...,n\},$ we get from (\ref{wx01n}) that
$$h_{J(i)}(z)=h_{J(i)}(e)\frac{h(z)}{h(e)}=ac_{J(i)}\chi(z), \text{ where }
c_{J(i)}:=\frac{h_{J(i)}(e)}{a}.$$ Since the function $J$ is
surjective we deduce that for all $j\in \{1,2,...,n\}$
$$h_j(z)=h_j(e)\frac{h(z)}{h(e)}=ac_j\chi(z), \text{
where } c_j:=\frac{h_j(e)}{a}.$$ Taking $x=e$ in (\ref{wx1n}) we
find that
\begin{equation} \label{g1n} g=b\chi -\sum_{k=1}^{n}c_kg_k, \text{ where }
b:=g(e)+\sum_{k=1}^{n}c_kg_k(e) \in K.
\end{equation} If $b=0$ we get that
$$g=-\sum_{k=1}^{n}c_kg_k=-\sum_{i=1}^{m}c_{J(i)}b_i\mu_i.$$This means
that $\{g,\mu_1,...,\mu_m\}$ is a linear dependent set, which is not
true by hypothesis. Thus, $b\neq 0$. Using the same argument we
prove that $\chi $ is distinct from
$\mu_1,\mu_2,...,\mu_m$.\\
To find $f$ we replace the functions $g,h,h_1,...,h_n$ by their
forms in (\ref{wx0n}) and we obtain that $f=ab\chi$.
\end{proof}

The following lemmas will be useful in the proofs of our main
results.
\begin{lem} \label{f0n} Let $(f,h_1,h_2,...,h_n)$ be a solution of the functional equation
\begin{equation} \label{gG0n}f(xy)=\sum_{j=1}^{n}g_j(x)h_j(y),\;x,y\in
M.
\end{equation} Then
$f=h_j=0$ for all $j\in \{1,...,n\}.$
\end{lem}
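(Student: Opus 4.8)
The plan is to exploit the same evaluation-and-associativity strategy as in Proposition \ref{indpn}, but the homogeneity of (\ref{gG0n}) makes things much simpler: a single substitution $y=e$ already pins $f$ down as a linear combination of the $\mu_i$, and comparing this with the original equation will force each $h_j$ to be a scalar multiple of \emph{two} distinct $\mu_i$ at once. First I would set $y=e$ in (\ref{gG0n}); since $g_j(xe)=g_j(x)$ this gives $f=\sum_{j=1}^{n}h_j(e)g_j$. Writing $\alpha_j:=h_j(e)$ and using the form (\ref{fgn}) together with the index function $J$ of Remark \ref{rmk}, this becomes $f=\sum_{i=1}^{m}b_i\alpha_{J(i)}\mu_i$.

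Next I would feed this expression for $f$ back into (\ref{gG0n}). By multiplicativity of each $\mu_i$ we have $f(xy)=\sum_{i=1}^{m}b_i\alpha_{J(i)}\mu_i(x)\mu_i(y)$, whereas the right-hand side of (\ref{gG0n}), rewritten via (\ref{frmij}), equals $\sum_{i=1}^{m}b_i\mu_i(x)h_{J(i)}(y)$. Equating the two and collecting the coefficients of each $\mu_i(x)$ yields, for all $x,y\in M$,
$$\sum_{i=1}^{m}b_i\mu_i(x)\bigl[\alpha_{J(i)}\mu_i(y)-h_{J(i)}(y)\bigr]=0.$$
For each fixed $y$ this is a vanishing linear combination of the distinct multiplicative functions $\mu_1,\dots,\mu_m$; since these are linearly independent by Proposition \ref{mudep} and every $b_i\neq0$, each bracket must vanish, so $h_{J(i)}=\alpha_{J(i)}\mu_i$ for every $i\in\{1,\dots,m\}$.

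The decisive step then uses the standing hypothesis $m_j-n_j\geq1$, i.e.\ that each block $[n_j,m_j]$ contains at least two indices. Fix $j\in\{1,\dots,n\}$ and choose distinct $i,i'\in[n_j,m_j]$, so that $J(i)=J(i')=j$. The identity above gives $h_j=\alpha_j\mu_i=\alpha_j\mu_{i'}$, whence $\alpha_j(\mu_i-\mu_{i'})=0$. As $\mu_i\neq\mu_{i'}$, this forces $\alpha_j=0$, and therefore $h_j=\alpha_j\mu_i=0$. Because $J$ is surjective and every block is nontrivial, the argument applies to all $j\in\{1,\dots,n\}$, giving $h_j=0$ throughout. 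Substituting back into (\ref{gG0n}) yields $f(xy)=0$ for all $x,y\in M$, i.e.\ $f=0$.

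The only genuine obstacle is bookkeeping: applying Proposition \ref{mudep} in the correct variable and invoking the ``two distinct $\mu_i$ in one block'' mechanism correctly. This is exactly where the assumption $m_j-n_j\geq1$ (each $g_j$ carries at least two multiplicative terms) is indispensable, for if some block were a singleton one would only recover $h_j=\alpha_j\mu_i$ rather than $h_j=0$.
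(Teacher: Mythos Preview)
Your argument is correct and considerably more direct than the paper's. The paper proceeds by contradiction: assuming $f\neq 0$, it isolates an index $k$ with $h_k(e)\neq 0$, verifies that $\{g_k,\mu_1,\dots,\mu_{m_{k-1}}\}$ is linearly independent, and then invokes Proposition~\ref{indpn} to force $g_k$ into the form $b\chi-\sum_{j\neq k}c_jg_j$ with $\chi$ multiplicative and distinct from $\mu_1,\dots,\mu_{m_{k-1}}$; this is then shown to clash with the linear independence of $\{\chi,\mu_1,\dots,\mu_m\}$. Only after this indirect detour does the paper conclude $f=0$ and then $h_j=0$.

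Your route bypasses Proposition~\ref{indpn} entirely. After the single substitution $y=e$ and one comparison of coefficients via Proposition~\ref{mudep}, you obtain $h_j=\alpha_j\mu_i$ for every $i$ in the $j$-th block, and the two-indices-per-block hypothesis kills $\alpha_j$ immediately. This is cleaner, self-contained, and makes the role of the assumption $m_j-n_j\geq 1$ fully transparent. The paper's approach, by contrast, recycles the machinery already built for the inhomogeneous equation, which is economical in terms of new ideas but logically heavier than necessary here.
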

\begin{proof}Assume for contradiction that $f\neq 0$. Taking $y=e$ in (\ref{gG0n}) we get that
\begin{equation} \label{h(e)}f=\sum_{j=1}^{n}h_j(e)g_j.\end{equation} If $h_j(e)=0$ for all $j \in \{1,...,n\}$
then we get from (\ref{h(e)}) that $f=0$ which contradicts the
hypothesis that $f\neq 0$. Thus, there exists $k\in \{1,...,n\}$
such that $h_k(e)\neq 0$. We may take $k=n$, and the equation
(\ref{gG0n}) becomes
$$f(xy)=g_n(x)h_n(y)+\sum_{j=1}^{n-1}g_j(x)h_j(y).$$ The set
$\{g_n,\mu_1,...,\mu_{m_{n-1}} \}$ is linearly independent. In fact,
if it is not, we will have
\begin{equation} \label{libre}\sum_{i=1}^{m_{(n-1)}}c_i\mu_i(x)+c_ng_n=0\end{equation} with $(c_1,c_2,...,c_n) \neq
(0,0,...,0)$. Using the form of $g_n$, equation (\ref{libre}) can be
written as follows
$$\sum_{i=1}^{m_{(n-1)}}c_i\mu_i(x)+\sum_{i=n_n}^{m}c_nb_i\mu_i(x)=0.$$
Since $n_n=m_{n-1}+1$ the last equation is equivalent to
$\sum_{i=1}^{m}a_i\mu_i(x)=0$, where $a_i=c_i$ for
$i=1,2,...,m_{n-1}$, and $a_i=c_nb_i$ for $i=n_n,...,m$. The
multiplicative functions $\mu_1,\mu_2,...,\mu_m$ are distinct and
nonzero, so according to Proposition \ref{mudep} we deduce that
$a_i=0$ for all $i\in \{1,2,...,m\}$, and hence $c_i=0$ for
$i=1,2,...,m_{n-1}$, and $c_n=0$. This means that
$(c_1,c_2,...,c_n)=(0,0,...,0)$ and this contradicts the hypothesis
that the set $\{g_n,\mu_1,...,\mu_{m_{n-1}}\}$ is linearly
dependent. Thus $\{g_n,\mu_1,...,\mu_{m_{n-1}}\}$ is a linearly
independent set. Now, using the fact that $f\neq 0$, $h_{n}(e) \neq
0$, and that $\{g_n,\mu_1,...,\mu_{m_{n-1}}\}$ is linearly
independent, we get from Proposition \ref{indpn} that there exist a
nonzero multiplicative function $\chi:M \longrightarrow K$ such that
$\chi \neq \mu_i$ for all $i \in \{1,2,...,m_{n-1}\}$, and $a,b \in
K^{\ast}$, and $c_j \in K,$ for $j=1,2,...,n$ such that
\begin{equation} \label{solln}f=ab\chi, \;g_{n}=b\chi
-\sum_{j=1}^{n-1}c_jg_j,\; h_n=a\chi,\;h_j=ac_j\chi \end{equation}
for all $j=1,2,...,n-1$. We get from (\ref{solln}) that
$$b\chi=\sum_{j=1}^{n-1}c_jg_j+g_{n}=\sum_{i=1}^{m_{n-1}}c_{J(i)}b_i\mu_i+\sum_{i=n_n}^{m}b_i\mu _i.$$
This means that the set $\{\chi,\mu_1,\mu_2,...,\mu_m\}$ is linearly
dependent
 which is not possible according to Proposition \ref{mudep} because $\chi,\mu_1,\mu_2,...,\mu_m$ are distinct multiplicative functions.
 We deduce that $f=0$. Hence, we get from the functional equation
 (\ref{gG0n}) that
$$\sum_{j=1}^{n}g_j(x)h_j(y)=\sum_{i=1}^{m}b_i(x)h_{J(i)}(y)=0.$$ Since $\{\mu_1,...,\mu_m\}$ is linearly independent we deduce that
$b_ih_{J(i)}=0$. Using the fact that the coefficients $b_i$ are
nonzero we deduce that $h_{J(i)}=0$. The subjectivity of the
function $J$ implies that $h_j=h_{J(i)}=0$ for all $j\in
\{1,...,n\}.$ This completes the proof.
\end{proof}
\begin{lem} \label{depnn} Let $(f,g,h,h_1,...,h_n)$ be  a solution of the functional
equation (\ref{gmn}) such that $f\neq 0$. If the set
$\{g,\mu_1,\mu_2,...,\mu_m\}$ is linearly independent, then the set
$\{f,\mu_1,\mu_2,...,\mu_m\}$ is also linearly independent.
\end{lem}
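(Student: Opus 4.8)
The plan is to argue by contradiction. I would suppose the set $\{f,\mu_1,\dots,\mu_m\}$ is linearly dependent, so that there is a nontrivial relation $\alpha f+\sum_{i=1}^{m}\beta_i\mu_i=0$ with $(\alpha,\beta_1,\dots,\beta_m)\neq(0,\dots,0)$. Since $\mu_1,\dots,\mu_m$ are distinct nonzero multiplicative functions, they are linearly independent by Proposition \ref{mudep}, so the scalar $\alpha$ cannot vanish (otherwise all $\beta_i$ would be zero and the relation trivial). Dividing by $\alpha$, I would record the key consequence that $f=\sum_{i=1}^{m}\gamma_i\mu_i$ for suitable $\gamma_i\in K$; in other words, the assumed dependence says exactly that $f$ lies in the span of $\mu_1,\dots,\mu_m$.

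The heart of the argument is then to feed this expression for $f$ back into the functional equation. Using that each $\mu_i$ is multiplicative, $f(xy)=\sum_{i=1}^{m}\gamma_i\mu_i(x)\mu_i(y)$, and comparing with the form (\ref{nf}) of (\ref{gmn}) yields, for each fixed $y$, the identity
$$h(y)\,g(x)+\sum_{i=1}^{m}\bigl(b_ih_{J(i)}(y)-\gamma_i\mu_i(y)\bigr)\mu_i(x)=0,\quad x\in M.$$
For fixed $y$ this is a linear relation among the functions $g,\mu_1,\dots,\mu_m$ in the variable $x$. Since $\{g,\mu_1,\dots,\mu_m\}$ is linearly independent by hypothesis, every coefficient must vanish; in particular the coefficient of $g$ forces $h(y)=0$. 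As $y$ is arbitrary, I would conclude $h=0$.

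With $h=0$ in hand, the functional equation (\ref{gmn}) collapses to $f(xy)=\sum_{j=1}^{n}g_j(x)h_j(y)$, which is precisely the equation (\ref{gG0n}) treated in Lemma \ref{f0n}. That lemma forces $f=0$, contradicting the standing hypothesis $f\neq0$. Hence $\{f,\mu_1,\dots,\mu_m\}$ must be linearly independent, as claimed. I expect the only delicate point to be the bookkeeping with the index map $J$ when passing between the grouped sum $\sum_{j}g_j(x)h_j(y)$ and its expanded form $\sum_{i}b_i\mu_i(x)h_{J(i)}(y)$; once that translation is invoked (it is already recorded in Remark \ref{rmk}), both the coefficient-comparison step and the appeal to Lemma \ref{f0n} are routine.
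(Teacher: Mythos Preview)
Your proof is correct and follows essentially the same route as the paper's: both write $f$ as a combination of the $\mu_i$, substitute into (\ref{nf}), and invoke Lemma~\ref{f0n}. The only cosmetic difference is the order of the steps---the paper first notes $h\neq0$ via Lemma~\ref{f0n} and then solves for $g$ at a point $y_0$ with $h(y_0)\neq0$ to contradict the independence of $\{g,\mu_1,\dots,\mu_m\}$, whereas you use that independence directly to force $h=0$ and then appeal to Lemma~\ref{f0n} to contradict $f\neq0$.
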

\begin{proof} If $h=0$ then we have
$$f(xy)=\sum_{j=1}^{n}g_j(x)h_j(y).$$ Applying Lemma \ref{f0n} we deduce
that $f=0$, which contradicts the hypothesis that $f\neq 0$. Hence
$h\neq 0$. Now assume that $\{f,\mu_1,...,\mu_m\}$ is linearly
dependent. This means that $f$ can be written as follows
$$f=\sum_{i=1}^{m}\alpha_i\mu_i,$$ where $\alpha_i\in K$ for all $i \in
\{1,...,m\}.$  Substituting the last expression of $f$ in equation
(\ref{nf}) we get \begin{equation}
\label{nfnn}\sum_{i=1}^{m}\alpha_i\mu_i(x)\mu_i(y)=g(x)h(y)+\sum_{i=1}^{m}b_i\mu_i(x)h_{J(i)}(y),\;
x,y \in G.\end{equation} Since $h\neq 0$ there exists $y_0$ such
that $h(y_0)\neq 0$. Thus, we obtain
$$g(x)=\sum_{i=1}^{m}\frac{\alpha_i\mu_i(y_0)-b_ih_{J(i)}(y_0)}{h(y_0)}\mu_i(x).$$
This means that $\{g,\mu_1,\mu_2,...,\mu_m \}$ is a linearly
dependent set. This completes the proof.
\end{proof}
Now, using Lemma \ref{f0n} and Lemma \ref{depnn} we can omit the
condition $h(e)\neq 0$ from Proposition \ref{indpn}.
\begin{prop} \label{indpnh} Let $(f,g,h,h_1,h_2,...,h_n)$ be a solution of the functional
equation (\ref{gmn}) such that $f\neq 0$. If the set
$\{g,\mu_1,\mu_2,...,\mu_m\}$ is linearly independent, then there
exist a nonzero multiplicative function $\chi:M \longrightarrow K$
such that $\chi \neq \mu_i$ for all $i \in \{1,2,...,m\}$, and $a,b
\in K^{\ast}$, and $c_j \in K$ for $j=1,2 ,...,n$, such that
\begin{equation} \label{+} f=ab\chi, \;g=b\chi
-\sum_{k=1}^{n}c_kg_k,\; h=a\chi,\;h_j=ac_j\chi \text{ for }
j=1,2,...,n.\end{equation} Conversely, the formulas (\ref{+}) define
solutions of equation (\ref{gmn}) .
\end{prop}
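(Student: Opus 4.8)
The plan is to observe that this statement is precisely Proposition \ref{indpn} with the extra hypothesis $h(e)\neq 0$ deleted, so the whole task reduces to showing that $h(e)\neq 0$ is forced by the remaining assumptions that $f\neq 0$ and that $\{g,\mu_1,\dots,\mu_m\}$ is linearly independent. Once $h(e)\neq 0$ is in hand I would simply quote Proposition \ref{indpn} to produce the multiplicative function $\chi$, the scalars $a,b\in K^{\ast}$ and $c_1,\dots,c_n\in K$, and the formulas (\ref{+}); then I would dispatch the converse by a direct substitution.

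To establish $h(e)\neq 0$, I would put $y=e$ in (\ref{gmn}), obtaining $f=h(e)g+\sum_{j=1}^{n}h_j(e)g_j$, and then expand the last sum in terms of the multiplicative functions exactly as in Remark \ref{rmk}, so that $f=h(e)g+\sum_{i=1}^{m}b_ih_{J(i)}(e)\mu_i$. The decisive input is Lemma \ref{depnn}: under the present hypotheses it guarantees that the set $\{f,\mu_1,\dots,\mu_m\}$ is linearly independent. If we had $h(e)=0$, the displayed identity would read $f=\sum_{i=1}^{m}b_ih_{J(i)}(e)\mu_i$, exhibiting $f$ as a linear combination of $\mu_1,\dots,\mu_m$ and hence a nontrivial dependence (coefficient $1$ on $f$) inside $\{f,\mu_1,\dots,\mu_m\}$, a contradiction. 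Therefore $h(e)\neq 0$, and Proposition \ref{indpn} applies verbatim to yield (\ref{+}) with $\chi$ distinct from each $\mu_i$.

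For the converse I would feed the formulas (\ref{+}) into the right-hand side of (\ref{gmn}) and track the cancellation: using $h=a\chi$ and $h_j=ac_j\chi$ gives $g(x)h(y)+\sum_{j=1}^{n}g_j(x)h_j(y)=a\chi(y)\big(b\chi(x)-\sum_{k=1}^{n}c_kg_k(x)\big)+a\chi(y)\sum_{j=1}^{n}c_jg_j(x)$, in which the two sums over the $g_k$ cancel to leave $ab\chi(x)\chi(y)=ab\chi(xy)=f(xy)$, since $\chi$ is multiplicative. I expect the only real obstacle to be the single observation that Lemma \ref{depnn} is exactly the tool that rules out $h(e)=0$; everything after that is mechanical.
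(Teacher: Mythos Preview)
Your proposal is correct and follows essentially the same approach as the paper: both argue that $h(e)=0$ would force $f$ into the span of $\mu_1,\dots,\mu_m$, contradicting Lemma~\ref{depnn}, and then invoke Proposition~\ref{indpn}. Your treatment of the converse is a bit more explicit than the paper's one-line ``by substitution,'' but the argument is the same.
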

\begin{proof}The case of $h(e)\neq 0$ is Proposition
\ref{indpn}. Suppose that $h(e)=0$. The functional equation
(\ref{wx0n}) becomes
$$f=\sum_{j=1}^{n}h_j(e)g_j=\sum_{i=1}^{m}b_ih_{J(i)}(e)\mu_i.$$
This means that $\{f,\mu_1,...,\mu_m\}$ is linearly dependent.
According to Lemma \ref{depnn} we deduce that
$\{g,\mu_1,...,\mu_m\}$ is linearly dependent. This contradicts the
hypothesis that $\{g,\mu_1,...,\mu_m\}$ is linearly independent.
Thus $h(e)\neq 0$.\\ The converse can be proved easily by
substituting the formulas (\ref{+}) in (\ref{gmn}).
\end{proof}
Proposition \ref{depn} is the main result concerning the solutions
of (\ref{gmn}), when $f\neq 0$ and $\{g, \mu_1,...,\mu_m\}$ is
linearly dependent.
\begin{prop}\label{depn}
Let $(f,g,h,h_1,...,h_n)$ be a solution of the functional equation
(\ref{gmn}) such that $f\neq 0$. If the set $\{g,
\mu_1,\mu_2,...,\mu_m\}$ is linearly dependent, then there exist a
unique $q\in \{1,..,n\}$ and constants $a,b,c,d,c_j\in K$ for $j\in
\{1,..,\widehat{q},...,n\}$, with $c\neq d$ and $(a,b)\neq (0,0)$
such that  \begin{itemize} \item If $m_q-n_q=1$, then
\begin{equation}f=(d-c)(bb_{n_q}\mu_{n_q}-ab_{m_q}\mu_{m_q}),\end{equation}
\begin{equation}g=db_{n_q}\mu_{n_q}+cb_{m_q}\mu_{m_q}+\sum_{j=1,j\neq q}^{n}c_j
g_j, \end{equation} \begin{equation}h=a\mu_{m_q}+b\mu_{n_q},\;
h_q=-da\mu_{m_q}-cb\mu_{n_q},\end{equation}\begin{equation}h_j=-c_ja\mu_{m_q}-c_jb\mu_{n_q}
\text{ for all } j \in
\{1,..,\widehat{q},...,n\}.\end{equation}\item If $m_q-n_q \geq2$,
then \begin{equation}f=a(c-d)b_{k}\mu_{k},
\;g=dg_{q}+(c-d)b_{k}\mu_{k}+\sum_{j=1,j\neq q}^{n}c_j
g_j,\end{equation}
\begin{equation}h=a\mu_{k},\;h_{q}=-da\mu_{k},\;h_j=-c_j\mu_{k} \text{ for all
}j\in \{1,..,\widehat{q},...,n\},\end{equation} where $k\in
\{n_q,...,m_q\}$ and $a\neq 0$.\end{itemize}
\end{prop}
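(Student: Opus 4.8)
The plan is to reduce (\ref{gmn}) to a pure Levi--Civita equation in the $\mu_i$ and then exploit associativity. Since $\{g,\mu_1,\dots,\mu_m\}$ is linearly dependent while $\{\mu_1,\dots,\mu_m\}$ is independent (Proposition \ref{mudep}), I can write $g=\sum_{i=1}^m\gamma_i\mu_i$. Substituting this into the form (\ref{nf}) gives $f(xy)=\sum_{i=1}^m\mu_i(x)H_i(y)$, where $H_i:=\gamma_i h+b_i h_{J(i)}$. First I would record two easy facts: $h\neq0$ (otherwise Lemma \ref{f0n} forces $f=0$, a contradiction), and, putting $y=e$, $f=\sum_i\lambda_i\mu_i$ with $\lambda_i:=H_i(e)$, so that $f\neq0$ means some $\lambda_i\neq0$.

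Next, associativity $f((xy)z)=f(x(yz))$ together with $\mu_i(xy)=\mu_i(x)\mu_i(y)$ yields $\sum_i\mu_i(x)[\mu_i(y)H_i(z)-H_i(yz)]=0$; since the $\mu_i$ are independent in $x$, $H_i(yz)=\mu_i(y)H_i(z)$, and setting $z=e$ gives the crucial identities
$$\gamma_i h+b_i h_{J(i)}=\lambda_i\mu_i,\qquad i=1,\dots,m.$$
Fixing a block $j$ and two indices $i,i'\in[n_j,m_j]$ (possible since $m_j-n_j\ge1$) and eliminating $h_{J(i)}=h_j$ between the two corresponding identities gives $(b_{i'}\gamma_i-b_i\gamma_{i'})\,h=b_{i'}\lambda_i\mu_i-b_i\lambda_{i'}\mu_{i'}$.

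The key structural step is then to show there is exactly one \emph{active} block. Because $h\neq0$ and distinct $\mu_i$ are independent, two different blocks cannot both contain a pair with $b_{i'}\gamma_i-b_i\gamma_{i'}\neq0$ (else $h$ would lie in the trivial intersection of the spans of two disjoint families of $\mu$'s). Conversely, if no block had such a pair then $\gamma_i/b_i$ would be constant on each block, forcing every $\lambda_i=0$ and hence $f=0$, a contradiction; this produces the unique $q$ and its uniqueness. For every inactive block $j\neq q$ one then reads off $\gamma_i=c_j b_i$ (so its contribution to $g$ is $c_j g_j$), $\lambda_i=0$, and $h_j=-c_j h$.

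Finally I would analyse block $q$ by cases; note that the active pair already forces both $h$ and $h_q$ into the two-dimensional span of the corresponding two $\mu$'s. If $m_q-n_q\ge2$ there is a third index $i$ in the block, and the identity $\gamma_i h+b_i h_q=\lambda_i\mu_i$ has left side in that two-dimensional span and right side a multiple of a new $\mu_i$; independence forces $\lambda_i=0$ and $h_q=-(\gamma_i/b_i)h$, so $h_q$ is a scalar multiple of $h$ and, feeding this back, $h$ is a scalar multiple of a single $\mu_k$, namely the unique index with $\lambda_k\neq0$. The stated formulas drop out with $c=\gamma_k/b_k$, $d=\gamma_i/b_i$ ($i\neq k$), $a\neq0$ and $c\neq d$. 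If $m_q-n_q=1$ the block is $\{n_q,m_q\}$, no third index exists, and I solve the $2\times2$ system for $h,h_q$ in terms of $\mu_{n_q},\mu_{m_q}$: here $h$ genuinely spans two exponentials, nonvanishing of the determinant is exactly $c\neq d$ (with $c=\gamma_{m_q}/b_{m_q}$, $d=\gamma_{n_q}/b_{n_q}$), and $(a,b)\neq(0,0)$ reflects $h\neq0$. Assembling $f=\sum_i\lambda_i\mu_i$ with the block data yields the displayed solutions. I expect the main obstacle to be precisely this dichotomy---seeing that a block of size $\ge3$ collapses $h$ onto one multiplicative function while a block of size $2$ permits a two-dimensional $h$---together with the constant bookkeeping required to match the normalisation in the statement.
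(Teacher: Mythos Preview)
Your proof is correct and follows essentially the same strategy as the paper's: both express $g=\sum_i\gamma_i\mu_i$, derive the key identities $\gamma_i h+b_i h_{J(i)}=\lambda_i\mu_i$ (the paper by direct substitution of (\ref{nfe}) into (\ref{nf}), you via associativity and the auxiliary functions $H_i$), isolate the unique ``active'' block $q$, and then split by its size. Your span-intersection argument for the uniqueness of $q$ and your handling of the third index when $m_q-n_q\ge2$ are a bit slicker than the paper's explicit $3\times3$ matrix inversion and coefficient chase, but the logical skeleton is identical.
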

\begin{proof} That the set $\{g,
\mu_1,\mu_2,...,\mu_m\}$ is linearly dependent means that
$$\beta_0g+\sum_{i=1}^{m}\beta_i \mu_i=0,$$ where $\beta_0,\beta_1,...,\beta_m$ are
constants such that $(\beta_0,\beta_1,...,\beta_m)\neq (0,0,...,0).$
Since the set $\{\mu_1,\mu_2,...,\mu_m\}$ is linearly independent,
then $\beta_0\neq 0$ and hence, we can write
\begin{equation} \label{gl}g=\sum_{i=1}^{m}a_i \mu _i \text{ where } a_i\in K.
\end{equation}
If we take $y=e$ in (\ref{nf}) we get \begin{equation} \label{nfe}
f=h(e)g+\sum_{i=1}^{m}h_{J(i)}(e)b_i\mu_i.\end{equation} Replacing
$g$ by its form (\ref{gl}) in (\ref{nf}), and using (\ref{nfe}) we
get
$$h(e)\sum_{i=1}^{m}a_i \mu _i(xy)+
\sum_{i=1}^{m}h_{J(i)}(e)b_i \mu_i(xy)=\sum_{i=1}^{m}a_i \mu
_i(x)h(y)+\sum_{i=1}^{m}b_i \mu _i(x)h_{J(i)}(y).$$ This can be
written as follows
$$\sum_{i=1}^{m}\mu _i(x)\left[a_i\left(h(e)\mu_i(y)-h(y)\right)+b_i \left(h_{J(i)}(e)\mu_i(y)-h_{J(i)}(y)\right)\right]=0.$$
 Since the set
$\{\mu_1,\mu_2,...,\mu_m \}$ is linearly independent we deduce that
\begin{equation} \label{kjn}[a_ih(e)+b_ih_{J(i)}(e)]\mu_i=a_ih+b_ih_{J(i)} \text{ for all } i=1,2,...,m.\end{equation}
Here we discuss two cases\\
\verb"Case 1" : Suppose that there exists $q \in \{1,...,n\}$ such
that $a_kb_s\neq a_sb_k$ for some pair $k,s \in
\{n_{q},...,m_{q}\}.$\\
Replacing $i$ in (\ref{kjn}) by $k$ and $s$ and using that
$J(s)=J(k)=q$ we get respectively
\begin{equation} \label{pln}a_{k}h+b_{k}h_q=[a_{k}h(e)+b_{k}h_q(e)]\mu_{k},\end{equation}
\begin{equation} \label{plsn}a_{s}h+b_{s}h_q=[a_{s}h(e)+b_sh_q(e)]\mu_{s}\end{equation}
Since $n\geq 2$, the set $\{1,...,\widehat{q},..,n\}$ is not empty.
Let $j\in \{1,...,\widehat{q},..,n\}$ and let $p \in
\{n_j,,...,m_j\}$. If we replace $i$ in (\ref{kjn}) by $p$ and use
that $J(p)=j$ we get that
\begin{equation}[a_ph(e)+b_ph_j(e)]\mu_p=a_ph+b_ph_j.\end{equation} This three equations can be written in
a matrix form:
$$AH=B\mu,$$ where
\begin{equation} \label{matrixAn}A=\begin{pmatrix}a_k
 &b_k&0\\
a_s&b_s&0\\a_p&0&b_p
\end{pmatrix},\end{equation}
\begin{equation} \label{ht}H^{T}= (h,h_q,h_j),\end{equation} \begin{equation} \label{muu} \mu^{T}=(\mu_k,
 \mu_s,
\mu_p),\end{equation} and  \begin{equation} B=\begin{pmatrix}a_kh(e)+b_kh_q(e) &0&0\\
0&a_sh(e)+b_sh_q(e)&0\\0&0&a_ph(e)+b_ph_j(e)
\end{pmatrix}.\end{equation}
Since $a_kb_s\neq a_sb_k$ and $b_p\neq 0$ we have
$$det(A)=b_p (a_kb_s-a_sb_
k)\neq 0.$$ Hence, the matrix $A$ has an inverse:

\begin{equation} \label{comn}
A^{-1}=\frac{1}{\alpha}\begin{pmatrix} b_s&-b_k&0\\
-a_s&a_k&0\\
-b_s\frac{a_p}{b_p}&b_k\frac{a_p}{b_p}&\frac{\alpha}{b_p}
\end{pmatrix},\end{equation} where $\alpha=a_kb_s-a_sb_k$.\\
We get after some computations that there exist $a,b \in K$ such
that
\begin{equation} \label{AAn}
h=a\mu_k+b\mu_s,\end{equation}
\begin{equation} \label{A0n}
h_q=-\frac{a_s}{b_s}a\mu_k-\frac{a_k}{b_k}b\mu_s,
\end{equation}
\begin{equation} \label{A1sn}h_j=-\frac{a_p}{b_p}a\mu_k-\frac{a_p}{b_p}b\mu_s+\alpha_j
\mu_p,\end{equation}
 where $\alpha_j \in K$ for all $j\in \{1,...,\widehat{q},...,n\}$. If we put $d:=a_s/b_s$, $c:=a_k/b_k$, $c_j:=a_p/b_p$ we get that
\begin{equation} \label{h123n}
h=a\mu_k+b\mu_s,\; hq=-da\mu_k-cb\mu_s,\end{equation}
\begin{equation} \label{hjn}h_j=-c_ja\mu_k-c_jb\mu_s+\alpha_j
\mu_p,\end{equation} for all $j\in \{1,...,\widehat{q},...,n\}$.
Since $a_sb_k\neq a_kb_s$, we have $d\neq c$.\\ If $a=b=\alpha_j=0$
for all $j\in \{1,...,\widehat{q},...,n\}$ then $h=h_q=h_j=0$  which
gives that $f=0$. Since $f$ is a nonzero function we deduce that
$(a,b,\alpha_1,...,\widehat{\alpha_q},..,\alpha_n )\neq
(0,0,0,...,0)$.\\Since we have $m_j-n_j\geq 1$ for all $j\in
\{1,...,\widehat{q},...,n\}$, then the set
$\{n_j,...,\widehat{p},...,m_j\}$ is not empty. Hence we can choose
$l\in \{n_j,...,\widehat{p},...,m_j\}$, where $j\in
\{1,...,\widehat{q},...,n\}$. Replacing $i$ in equation (\ref{kjn})
by $l$ we get
\begin{equation} \label{ijkpn} [a_lh(e)+b_lh_j(e)]\mu_l=a_lh+b_lh_j.\end{equation} Deriving the
expressions of $h(e)$ and $h_j(e)$ from equations (\ref{h123n}) and
(\ref{hjn}) and replacing the functions $h$, and $h_j$ by their
forms, the equation (\ref{ijkpn}) can be written as follows
$$[a_l(a+b)-b_l(c_ja+c_jb-\alpha_j)]\mu_l=a_l(a\mu_k+b\mu_s)+b_{l}(-c_ja\mu_k-c_jb\mu_s+\alpha_j \mu_p).$$
Hence
$$[(a+b)(a_l-b_lc_j)+b_l \alpha_j]\mu_l=a(a_l-b_lc_j)\mu_k+b(a_l-b_lc_j)\mu_s+b_l\alpha_j \mu_p.$$
The set $\{\mu_s,\mu_k,\mu_p,\mu_l\}$ is linearly independent
because $\mu_s,\mu_k,\mu_p,\mu_l$ are distinct.\\
Thus we get that
\begin{equation} \label{coefn}b_l\alpha_j=0,\;a(a_l-b_lc_j)=0,
\;b(a_l-b_lc_j)=0,\;(a+b)(a_l-b_lc_j)+b_l \alpha_j=0.\end{equation}
Since the coefficients $b_i$ are nonzero for all $i=1,2,...,m$ we
get from the first relation of (\ref{coefn}) that $\alpha_j =0$ for
all $j\in \{1,..,\widehat{q},...,n\}$. Hence $(a,b)\neq (0,0)$
because $(a,b,\alpha_1,..,\alpha_n)\neq (0,0,0,..,0)$. Thus we get
from the second and the third equation of (\ref{coefn}) that
$a_l=c_jb_l$ for all $l \in \{n_j,...,\widehat{p},...,m_j\}$. Since
$c_j=a_p/b_p$ we get that $a_i=c_jb_i$ for all $i \in
\{n_j,...,m_j\}$ where $j\in \{1,2,...,\widehat{q},...,n\}$. This
means that $a_p/b_p=a_l/b_l=c_j$, and hence $a_pb_l=a_lb_p$ for all
$p,l\in \{n_j,...,m_j\}$, and for all $j\in
\{1,2,...,\widehat{q},...,n\}$. This proves the uniqueness of $q \in
\{1,2,...,n\}$.
\\ Consequently, using the form of $g$ we obtain
\begin{align*}g&=\sum_{i=1}^{m}a_i\mu_i
=\sum_{i=1}^{m_1}a_i\mu_i+\sum_{i=n_2}^{m_2}a_i\mu_i+...+\sum_{i=n_n}^{m}a_i\mu_i\\
&=\sum_{i=n_q}^{m_q}a_i\mu_i+\sum_{j\neq
q}^{n}\sum_{i=n_j}^{m_j}a_i\mu_i\\&=\sum_{i=n_q}^{m_q}a_i\mu_i+
\sum_{j\neq q}^{n}c_j\sum_{i=n_j}^{m_j}b_i\mu_i. \end{align*}Hence
\begin{equation} \label{gform} g =\sum_{i=n_q}^{m_q}a_i\mu_i+\sum_{j\neq q}^{n}c_j
g_j.\end{equation}
Here, we treat two cases:\\
\verb"Case 1.1":  Suppose that $m_q-n_q=1$. We may take $s=n_q$ and
$k=n_q+1=m_q$. In this case we have
$$g=a_{n_q}\mu_{n_q}+a_{m_q}\mu_{m_q}+\sum_{j\neq q}^{n}c_jg_j, \;g_q=b_{n_q}\mu_{n_q}+b_{m_q}\mu_{m_q},$$ and we get from
(\ref{h123n}) that $$h(e)=a+b,\;h_q(e)=-da-cb,\;
h_j(e)=-c_ja-c_jb.$$ Using those forms we obtain that
\begin{align*}f&=h(e)g+h_q(e)g_q+\sum_{j\neq q}^{n}h_j(e)g_j\\
&=h(e)(a_{n_q}\mu_{n_q}+a_{m_q}\mu_{m_q})+h_q(e)g_q+\sum_{j\neq
q}^{n}(h_j(e)+c_jh(e))g_j\\
&=(a+b)(a_{n_q}\mu_{n_q}+a_{m_q}\mu_{m_q})+(-da-cb)(b_{n_q}
\mu_{n_q}+b_{m_q}\mu_{m_q}).\end{align*} Since $a_{n_q}=db_{n_q}$
and $a_{m_q}=cb_{m_q}$ we obtain after simplification that
\begin{align*}f&=b(d-c)b_{n_q}\mu_{n_q}+a(c-d)b_{m_q}\mu_{m_q}\\&=(d-c)(bb_{n_q}\mu_{n_q}-ab_{m_q}\mu_{m_q}).\end{align*}\\
\verb"Case 1.2" :  Suppose that $m_q-n_q\geq 2$. In this case we can
choose an element $r\in \{n_q,...,m_q\}$ such that $r\neq s$ and
$r\neq k$. Substituting the forms of $h$, $h_q$, $h(e)$, and
$h_q(e)$ in (\ref{kjn}) and replacing $i$ by $r$ we find
\begin{equation}\label{eqn}[a_r(a+b)+b_r(-da-cb)]\mu_r=a_r(a\mu_k+b\mu_s)+b_r(-da\mu_k-cb\mu_s),\end{equation}
which can be written as follows
$$[a_r(a+b)+b_r(-da-cb)]\mu_r=a(a_r-b_rd)\mu_k+b(a_r-cb_r)\mu_{s_q}.$$
Since $\mu_s,\mu_k,\mu_r$ are distinct the set
$\{\mu_r,\mu_k,\mu_r\}$ is linearly independent. This implies that
$$a_r(a+b)+b_r(-da-cb)=0,\;a(a_r-db_r)=0,\;b(a_r-cb_r)=0.$$
Taking in account that $(a,b)\neq (0,0)$ and $c\neq d$ we deduce
that either $a\neq 0$ and $b=0$, or $b\neq 0$ and $a=0$.\\ Assume
that $a\neq 0$ and $b=0$. In this case we find that $a_r=db_r$.
Noting that $a_s=db_s$ we deduce that $a_i=db_i$ for all $i\in
\{n_q,...,\widehat{k},...,m_q\}$. Thus, using equation (\ref{gform})
we get that
\begin{align*}g&=\sum_{i=n_q}^{m_q}a_i\mu_i+\sum_{j=1,j\neq q}^{n}c_j
g_j\\
&=d\sum_{i=n_q,i\neq k}^{m_q}b_i\mu_i+a_k\mu_k+\sum_{j\neq
q}^{n}c_j g_j\\
&=d\sum_{i=n_q,i\neq k}^{m_q}b_i\mu_i+cb_k\mu_k+\sum_{j\neq
q}^{n}c_j g_j\\ &=dg_q+(c-d)b_k\mu_k+\sum_{j\neq q}^{n}c_j
g_j.\end{align*} Using that $b=0$ we deduce from (\ref{h123n}) that
$h=a\mu_k,\;h_q=-da\mu_k,\;h_j=-c_ja\mu_k,$ for all $j\in
\{1,...,\widehat{q},...n\}$,
\begin{align*}f&=h(e)g+h_qg_q+\sum_{j=1,j\neq q}^{n}h_j(e)g_j\\
&=ag-d a g_k-a\sum_{j=1,j\neq q}^{n}c_j g_j\\
&=a\left(dg+(c-d)b_k\mu_k+\sum_{j=1,j\neq q}^{n}c_j
g_j\right)-dag-a\sum_{j=1,j\neq q}^{n}c_j g_j\\
&=a(c-d)b_k\mu_k.\end{align*} So we obtain solution (a).\\ The case
of $a=0$ and $b\neq 0$ can
be treated in a similar way.\\
\verb" Case 2" : Suppose that $a_sb_k=a_kb_s$ for all $k,s \in
\{n_j,...,m_j\}$ and all $j=1,2,...,n$. This means that there exists
$v_j \in K $ such that $a_i=v_jb_i$ for all $i\in  \{
n_j,...,m_j\}$. Hence we have
$$g=\sum_{i=1}^{m}a_i\mu_i=\sum_{i=1}^{m_1}a_i\mu_i+\sum_{i=n_2}^{m_2}a_i\mu_i+...+\sum_{i=n_n}^{m}a_i\mu_i$$
$$=v_1\sum_{i=1}^{m_1}b_i\mu_i+v_2\sum_{i=n_2}^{m_2}b_i\mu_i+...+v_n\sum_{i=n_n}^{m}b_i\mu_i=\sum_{j=1}^{n}v_jg_j.$$ Substituting the last form of $g$ in
(\ref{wx0n}) we get that
\begin{align*}f(xy)&=g(x)h(y)+\sum_{j=1}^{n}g_j(x)h_j(y)\\
&=\sum_{j=1}^{n}v_jg_j(x)h(y)+\sum_{j=1}^{n}g_j(x)h_j(y)\\
&=\sum_{j=1}^{n}g_j(x)(v_jh+h_j)(y).\end{align*} This equation has
the form of the functional equation (\ref{gG0n}). According to Lemma
\ref{f0n} we deduce that $f=0$, which is not true by hypothesis.
Thus this case cannot occur. This completes the proof.
\end{proof}
Theorem \ref{soln} gives all solutions of the functional equation
(\ref{gmn}).
\begin{thm} \label{soln} Let $(f,g,h,h_1,h_2,...,h_n)$ be a solution of
the functional equation (\ref{gmn}). Then the solutions have one of
the following forms:
\begin{enumerate} \item $$f=ab\chi, g=b\chi-\sum_{k=1}^{n}c_k
g_k,h=a\chi,h_j=ac_j\chi, \text{ for }j=1,2,...,n,$$ where $\chi:M
\longrightarrow K$ is any nonzero multiplicative function such that
$\chi \neq \mu_i$ for all $i \in \{1,2,...,m\}$, and $a,b \in
K^{\ast}$, and $c_j \in K$ for $j =1,2,...,n$.

\item there exists a unique $q\in \{1,..,n\}$ and there exist constants
$a,b,c,d,c_j\in K$ for $j\in \{1,..,\widehat{q},...,n\}$, with
$c\neq d$ and $(a,b)\neq (0,0)$ such that  \begin{itemize} \item If
$m_q-n_q=1$, then
\begin{equation}f=(d-c)(bb_{n_q}\mu_{n_q}-ab_{m_q}\mu_{m_q}),\end{equation}
\begin{equation}g=db_{n_q}\mu_{n_q}+cb_{m_q}\mu_{m_q}+\sum_{j=1,j\neq q}^{n}c_j
g_j, \end{equation} \begin{equation}h=a\mu_{m_q}+b\mu_{n_q},\;
h_q=-da\mu_{m_q}-cb\mu_{n_q},\end{equation}\begin{equation}h_j=-c_ja\mu_{m_q}-c_jb\mu_{n_q}
\text{ for all } j \in
\{1,..,\widehat{q},...,n\}.\end{equation}\item If $m_q-n_q \geq2$,
then \begin{equation}f=a(c-d)b_{k}\mu_{k},
\;g=dg_{q}+(c-d)b_{k}\mu_{k}+\sum_{j=1,j\neq q}^{n}c_j
g_j,\end{equation}
\begin{equation}h=a\mu_{k},\;h_{q}=-da\mu_{k},\;h_j=-c_j\mu_{k} \text{ for all
}j\in \{1,..,\widehat{q},...,n\},\end{equation} where $k\in
\{n_q,...,m_q\}$ and $a\neq 0$.\end{itemize}

\item
$f=0,\;g=\sum_{k=1}^{n}c_k g_k$, $h:M\longrightarrow K$ is any
nonzero function, $h_j=-c_jh$, for all $j\in \{1,..,n\}$ where
$c,c_j\in K.$

\item $f=0$, $g:M\longrightarrow K$ is any function, $h=h_j=0$ for
all $j\in \{1,..,n\}$. \end{enumerate}
\end{thm}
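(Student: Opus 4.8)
The plan is to prove the theorem by a single case split, feeding the two previously established propositions into the branches where $f\neq 0$ and supplying the one missing argument, namely the degenerate branch $f=0$. Every solution falls into exactly one of three mutually exclusive situations: either $f\neq 0$ with $\{g,\mu_1,\dots,\mu_m\}$ linearly independent, or $f\neq 0$ with that set linearly dependent, or $f=0$. In the first situation Proposition \ref{indpnh} applies verbatim and delivers the first form of the theorem. In the second, Proposition \ref{depn} applies and delivers the second form, including the uniqueness of $q$ and the split according to whether $m_q-n_q=1$ or $m_q-n_q\geq 2$. So the entire $f\neq 0$ half of the statement is obtained simply by quoting these two results, with no further computation.

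The substance left to prove is the case $f=0$, which I would handle directly. Using (\ref{frmij}) the equation becomes $g(x)h(y)+\sum_{i=1}^m b_i\mu_i(x)h_{J(i)}(y)=0$, and I would split on whether $h$ vanishes. If $h=0$, then $\sum_{i=1}^m b_i\mu_i(x)h_{J(i)}(y)=0$; since the $\mu_i$ are distinct and nonzero they are linearly independent by Proposition \ref{mudep}, so $b_ih_{J(i)}=0$, and as $b_i\neq 0$ we get $h_{J(i)}=0$. Surjectivity of $J$ then forces $h_j=0$ for every $j$, while $g$ is unconstrained; this is the fourth form. If instead $h\neq 0$, choose $y_0$ with $h(y_0)\neq 0$ and solve the equation for $g$ at $y=y_0$ to see that $g=\sum_{i=1}^m a_i\mu_i$ for suitable $a_i\in K$. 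Reinserting this expression and again using linear independence of the $\mu_i$ yields $a_ih+b_ih_{J(i)}=0$ for each $i$, hence $h_{J(i)}=-(a_i/b_i)h$.

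The only point needing care — and the closest thing to an obstacle — is the passage from these per-index relations back to the block form demanded by the third case. Because $h\neq 0$, the equalities $h_{J(i)}=-(a_i/b_i)h$ force the ratio $a_i/b_i$ to be constant, say equal to $c_j$, over all indices $i$ in a fixed block $[n_j,m_j]$; thus $a_i=c_jb_i$ throughout that block, and summing over the block gives $g=\sum_{j=1}^n c_jg_j$ together with $h_j=-c_jh$, which is precisely the third form. All the real difficulty of the classification has already been absorbed into Propositions \ref{indpnh} and \ref{depn}, so at this level the theorem is essentially an act of assembly. Finally, the converse — that each of the four listed families genuinely solves (\ref{gmn}) — follows by direct substitution, exactly as in the converse half of Proposition \ref{indpnh}.
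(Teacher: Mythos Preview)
Your proposal is correct and follows essentially the same approach as the paper: quote Propositions \ref{indpnh} and \ref{depn} for the $f\neq 0$ branches, and for $f=0$ split on whether $h=0$ or $h\neq 0$, using linear independence of the $\mu_i$ (Proposition \ref{mudep}) and surjectivity of $J$. The only cosmetic difference is that in the $h\neq 0$ subcase the paper writes $g=\sum_j c_jg_j$ directly with $c_j=-h_j(y_0)/h(y_0)$ and stays at the $g_j$ level, whereas you first expand $g$ in the $\mu_i$, obtain the per-index relations $a_ih+b_ih_{J(i)}=0$, and then aggregate back to blocks; both arrive at the same conclusion.
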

\begin{proof} The statements (1),(2) and (3) are proved in Proposition \ref{depn} and Proposition
\ref{indpn}. It is left to treat the case of $f=0$.\\Assume that
$f=0$. The functional equation (\ref{gmn}) reduces to
\begin{equation}
\label{g1h1n}g(x)h(y)+\sum_{j=1}^{n}g_j(x)h_j(y)=0.\end{equation}
Here we discuss two cases:\\ If $h=0$ then we get that
$$\sum_{j=1}^{n}g_j(x)h_j(y)=0.$$ Using the form (\ref{fgn}) of $g$ this can be written as follows
$$\sum_{i=1}^{m}b_ih_{J(i)}(y)\mu_i(x)=0$$
Since $\mu_1,...,\mu_m$ are distinct multiplicative functions, the
set $\{ \mu_1,...,\mu_m\}$ is linearly independent, hence
$h_{J(i)}=0$ because $b_i \neq 0$ for all $i=1,...,m$. Using the
fact that $J$ is surjective we deduce that $h_j=0$ for all $j\in
\{1,2,...,n\}$. This is the solution (5) of Theorem \ref{soln}.\\If
$h\neq 0,$ then there exists $y_0\in K$ such that $h(y_0)\neq 0$.
Taking $y=y_0$ in (\ref{g1h1n}) we deduce that
\begin{equation} \label{q5}g=\sum_{j=1}^{n}c_jg_j,\end{equation} where
$c_j=-h_j(y_0)/h(y_0)$. Replacing $g$ in (\ref{g1h1n}) by its new
form (\ref{q5}) we get $$\sum_{j=1}^{n}[c_jh(y)+h_j(y)]g_j(x)=0.$$
Proceeding as in the first case we deduce that $h_j=-c_jh$  for all
$j\in \{1,2,...,n\}$. This is the solution (4), and this completes
the proof.
\end{proof}
The following theorem gives the solutions of the functional equation
(\ref{gmn}) when $n=2$.
\begin{thm} \label{sol} Let $(f,g,h,h_1,h_2)$ be a solution of
the functional equation
\begin{equation} \label{n3}f(xy)=g(x)h(y)+g_1(x)h_1(y)+g_2(x)h_2(y),\;x,y \in M,\end{equation} such that
$g_1=\sum_{i=1}^{N}b_i\mu_i$, $g_2=\sum_{i=N+1}^{m}b_i\mu_i$, where
$N \geq 2$, $m-N\geq 2$, $\{\mu_1,...,\mu_m\}$ are distinct nonzero
multiplicative functions and $b_1,...,b_m$ are all nonzero.\\Then we
have one of the following forms:
\begin{enumerate} \item $$f=ab\chi,
g=b\chi-c_1g_1-c_2g_2,\;h=a\chi,h_1=ac_1\chi,h_2=ac_2\chi,$$ where
$\chi:M \longrightarrow K$ is any nonzero multiplicative function
such that $\chi \neq \mu_i$ for all $i \in \{1,2,...,m\}$, and $a,b
\in K^{\ast}$, and $c_1,c_2 \in K.$
 \item
$$f=(d-c)(bb_1\mu_1-ab_2\mu_2),\;g=db_1\mu_1+cb_2\mu_2+c_2g_2,$$
$$h=a\mu_{2}+b\mu_{1},\;
h_1=-da\mu_2-cb\mu_1,\;h_2=-c_2a\mu_2-c_2b\mu_1,$$ where
$a,b,c,d,c_2\in K$ such that $c\neq d$ and $(a,b)\neq (0,0)$.
\item
$$f=a(c-d)b_k\mu_k,\;g=dg_1+(c-d)b_k\mu_k+c_2g_2,\;$$
$$h=a\mu_k,\;h_1=-da\mu_k,\;h_2=-c_2\mu_k,$$ where $k\in \{1,...,N\}$, $N\geq 3$ and $a,b,c,d,c_2\in K$ such that
$c\neq d$ and $a\neq 0$.
\item
$$f=(d-c)(bb_{N+1}\mu_{N+1}-ab_{m}\mu_{m}),\;g=db_{N+1}\mu_{N+1}+cb_{m}\mu_{m}+c_1g_1,$$
$$h=a\mu_{m}+b\mu_{N+1},\;h_1=-c_1a\mu_{m}-c_1b\mu_{N+1},\;
h_2=-da\mu_{m}-cb\mu_{N+1},$$ where $m-N=2$, $a,b,c,d,c_1\in K$ such
that $c\neq d$ and $(a,b)\neq (0,0)$.
\item
$$f=a(c-d)b_k\mu_k,\;g=dg_2+(c-d)b_k\mu_k+c_1g_1,\;$$
$$h=a\mu_k,\;h_1=-c_1\mu_k\;h_2=-da\mu_k,$$ where
 $m-N\geq 3$, $k\in \{N+1,...,m\}$ and
$a,b,c,d,c_1\in K$ such that $c\neq d$ and $a\neq 0$.
\item
$f=0,\;g=c_1g_1+c_2g_2$, $h:M\longrightarrow K$ is any nonzero
function, $h_1=-c_1h,\;h_2=-c_2h$, where $c_1,c_2\in K.$
\item
$f=0$, $g:M\longrightarrow K$ is any function, $h=h_1=h_2=0$.
\end{enumerate}
\end{thm}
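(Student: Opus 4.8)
The plan is to obtain Theorem \ref{sol} as the specialization of the general result Theorem \ref{soln} to the case $n=2$. Equation (\ref{n3}) is exactly equation (\ref{gmn}) with $n=2$, with the two blocks $g_1=\sum_{i=1}^{N}b_i\mu_i$ (so that $n_1=1$, $m_1=N$) and $g_2=\sum_{i=N+1}^{m}b_i\mu_i$ (so that $n_2=N+1$, $m_2=m$); the hypotheses $N\geq 2$ and $m-N\geq 2$ are precisely the conditions $m_j-n_j\geq 1$ for $j=1,2$. Thus every solution falls into one of the four families produced by Theorem \ref{soln}, and all that remains is to translate the abstract index data $n_q,m_q$ into the concrete indices $1,N,N+1,m$ and to determine which subcases can occur.

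First I would record the families that carry over with no case analysis. If $f\neq 0$ and $\{g,\mu_1,\ldots,\mu_m\}$ is linearly independent, then the independent family of Theorem \ref{soln}, read with $\sum_{k=1}^{2}c_kg_k=c_1g_1+c_2g_2$, gives $f=ab\chi$, $g=b\chi-c_1g_1-c_2g_2$, $h=a\chi$, $h_1=ac_1\chi$, $h_2=ac_2\chi$, which is form (1). If $f=0$, then the two $f=0$ families of Theorem \ref{soln} (with $h$ nonzero, respectively $h=h_j=0$) give forms (6) and (7) directly, upon writing $g=c_1g_1+c_2g_2$ and $h_j=-c_jh$.

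The substantive part is the case $f\neq 0$ with $\{g,\mu_1,\ldots,\mu_m\}$ linearly dependent. Here Theorem \ref{soln} furnishes a unique $q\in\{1,2\}$, and the shape of the solution is governed by $m_q-n_q$, so I would split on the value of $q$. For $q=1$ one has $m_q-n_q=N-1$: the value $N-1=1$ (i.e.\ $N=2$) is the subcase $m_q-n_q=1$ and yields form (2) after substituting $\mu_{n_q}=\mu_1$, $\mu_{m_q}=\mu_2$ and retaining the single remaining coefficient $c_2$; the value $N-1\geq 2$ (i.e.\ $N\geq 3$) is the subcase $m_q-n_q\geq 2$ and yields form (3) with $k\in\{1,\ldots,N\}$. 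For $q=2$ one has $m_q-n_q=m-N-1$: the value $m-N-1=1$ (i.e.\ $m-N=2$) gives form (4) with $\mu_{n_q}=\mu_{N+1}$, $\mu_{m_q}=\mu_m$ and remaining coefficient $c_1$; the value $m-N-1\geq 2$ (i.e.\ $m-N\geq 3$) gives form (5) with $k\in\{N+1,\ldots,m\}$.

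The only delicate point---and the nearest thing to an obstacle---is the index bookkeeping: one must check that for each $q$ the surviving index $j\neq q$ is read off correctly (so the free coefficient is $c_2$ when $q=1$ and $c_1$ when $q=2$) and that the endpoints $\mu_{n_q}$, $\mu_{m_q}$ are taken from the correct block, $[1,N]$ or $[N+1,m]$. Once these identifications are in place, each displayed formula of forms (1)--(7) coincides term by term with the corresponding formula in Theorem \ref{soln}, which completes the proof.
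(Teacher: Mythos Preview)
Your proposal is correct and is exactly the intended approach: the paper states Theorem \ref{sol} immediately after Theorem \ref{soln} without a separate proof, so it is meant to be read as the $n=2$ specialization, and your translation of the index data ($n_1=1$, $m_1=N$, $n_2=N+1$, $m_2=m$, with $m_j-n_j\geq 1$ matching $N\geq 2$ and $m-N\geq 2$) and your split on $q\in\{1,2\}$ reproduce forms (1)--(7) term by term.
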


% ----------------------------------------------------------------
\bibliographystyle{amsplain}
\vspace{1cm}
Authors'address. University Ibn Zohr, Faculty of Sciences,
Department of Mathematics, Agadir, Morocco.
\email{ keltoumabelfakih@gmail.com; elqorachi@hotmail.com}%
\end{document}